\newtheorem{theorem}{Theorem}[section]
\newtheorem{lemma}[theorem]{Lemma}
\newtheorem{conjec}{Conjecture}[section]
\theoremstyle{definition}
\newtheorem{define}{Definition}[section]
\newcommand{\verts}[1]{\left\lvert #1 \right\rvert}
\newcommand{\abs}{\verts}
\newcolumntype{L}{>{$}l<{$}}
\newcolumntype{C}{>{$}c<{$}}
\newcolumntype{R}{>{$}r<{$}}
\newcommand{\setp}{\mathbb P}
\newcommand{\pr}{\setp}
\newcommand{\Mod}[1]{\ (\text{mod}\ #1)}
\title{Nonuniform Distributions of Residues of Prime Sequences in Prime Moduli\vspace{-1ex}}
\author{David Wu}
\date{\vspace{-10ex}}
\renewcommand{\vec}{\mathbf}
\newcommand{\li}{\mathrm{li}}
\newcommand{\tset}{\mathcal{T}}
\newcommand{\aset}{\mathcal{A}}
\numberwithin{equation}{section}
\begin{document}
\maketitle

\begin{abstract}
    For positive integers $q$, Dirichlet's theorem states that there are infinitely many primes in each reduced residue class modulo $q$. A stronger form of the theorem states that the primes are equidistributed among the $\varphi(q)$ reduced residue classes modulo $q$. This paper considers patterns of sequences of consecutive primes $(p_n, p_{n+1}, \ldots, p_{n+k})$ modulo $q$. Numerical evidence suggests a preference for certain prime patterns. For example, computed frequencies of the pattern $(a,a)$ modulo $q$ up to $x$ are much less than the expected frequency $\pi(x)/\varphi(q)^2$. We begin to rigorously connect the Hardy-Littlewood prime $k$-tuple conjecture to a conjectured asymptotic formula for the frequencies of prime patterns modulo $q$. 
\end{abstract}

\section{Introduction}
Analytic number theory uses real and complex analysis techniques to prove properties about the integers. It turns out that many properties of prime numbers are encoded in the properties of special functions. For example, the behavior of the zeros of the Riemann zeta function strengthens a famous asymptotic formula known as the Prime Number Theorem (PNT) \cite{apostol}. The Riemann Hypothesis (RH), one of the most well-known open problems in number theory, conjectures that all nontrivial zeros of the Riemann zeta function have real part $\frac{1}{2}$; RH would imply a stronger form of PNT. 

The Riemann zeta function is only one of a more general class of functions, the Dirichlet $L$-functions. Peter Dirichlet \cite{dirichlet_2013} used these $L$-functions to prove that arithmetic progressions with coprime first term and common difference contain an infinite number of primes: this is Dirichlet's theorem. Dirichlet's use of $L$-functions invoked the realm of analysis to prove statements about integers, thus beginning the study of analytic number theory.

The $\varphi(d)$ classes of residues modulo $d$ coprime to $d$ are referred to as the reduced residue classes, where $\varphi(n)$ is Euler's totient function. For example, the set of residues congruent to $1$ modulo $4$ is a reduced residue class. Applying Dirichlet's theorem to the arithmetic progression with first term $1$ and common difference $4$ shows that there are infinitely many primes in the reduced residue class $1$ modulo $4$. A natural followup question asks how prime sequences are distributed among the reduced residue classes modulo $d$. 

Before we discuss the distribution of primes among reduced residue classes, we introduce a few standard definitions. Let $\pi(x)$ be the usual prime counting function, i.e. the number of primes less than or equal to $x$. Furthermore, let $p(x) \sim q(x)$ denote asymptotic equivalence, i.e. $\lim\limits_{x \to \infty} \frac{p(x)}{q(x)} = 1$. We also make extensive use of big $O$ notation. We say $f(x) = O(g(x))$ if there exists some absolute constant $C$ such that $\abs{f(x)} \le C\abs{g(x)}$ for sufficiently large $x$. The similar notation $f(x) = O_n(g(x))$ means the constant $C$ in the definition of $O(g(x))$ depends on $n$. 

A key idea in analytic number theory is to compare a discrete function such as $\pi(x)$ to a continuous function such as the logarithmic integral $\li(x) = \int_{2}^x \frac{dt}{\log t}$. The famous Prime Number Theorem states that $\pi(x) \sim \li(x)$, and Schoenfeld \cite{schoenfeld1976} showed that RH implies that $\abs{\pi(x)-\li(x)} < \frac{\sqrt{x}\log x}{8\pi}$, 
for $x \ge 2657$. 

We introduce notation analogous to $\pi(x)$ for the purposes of this discussion following Lemke Oliver and Soundararajan's notation \cite{oliver2016}. Let $p_n$ refer to the $n$th prime when the primes are listed in increasing order, the pattern $\vec{a} = (a_1, a_2, \ldots, a_k)$ be a vector of length $k$, and $q \ge 3$ be a positive integer. Define \[\pi(x; q, \vec{a}) = \#\{p_n \le x : p_{n+i-1} \equiv a_i \Mod{q} \text{ for } 1 \le i \le k\}.\] This notation counts the number of consecutive prime sequences that follow the pattern $\vec{a}$ modulo $q$. Using this notation, the PNT for arithmetic progressions applied to the simple case where $\vec{a} = (a)$ yields \begin{equation}\label{introeq}
\pi(x; q, a) \sim \frac{\li(x)}{\varphi(q)}.
\end{equation} 

Although \eqref{introeq} shows that primes are roughly equidistributed among the reduced residue classes modulo $q$, Chebyshev \cite{cheb} observed that there are almost always more primes of the form $4k+3$ than of the form $4k+1$; this bias was explained by Rubinstein and Sarnak \cite{rubinstein1994} to arise from the error term of $O(x^{1/2+\epsilon})$ in PNT when assuming RH. Chebyshev's bias is one of the first mentions of nonuniform behavior of the primes when reduced modulo $q$. 


Larger biases manifest when the length of $\vec{a}$ is greater than or equal to $2$ that cannot be solely attributed to error terms of size $O(x^{1/2+\epsilon})$. In \cite{oliver2016} the frequencies of consecutive prime pairs modulo $10$ are tabulated, and it was observed that $\pi(10^8; 10, (1,1)) \approx 4.62 \times 10^6$ and $\pi(10^8; 10, (9, 1)) \approx 7.99 \times 10^6$, both of which are very different than the expected frequency of $10^8/\varphi(10)^2 = 6.25 \times 10^6$ predicted by naively generalizing \eqref{introeq} by replacing $\varphi(q)$ with $\varphi(q)^2$.

While it is known that primes are roughly equidistributed among reduced residue classes according to \eqref{introeq}, it is not known whether for arbitrary $\vec{a}$ with the length of $\vec{a}$ at least $2$, the modified prime counting function $\pi(x; q, \vec{a})$ tends to infinity as $x$ tends to infinity. Shiu \cite{shiu2000} proved that $\pi(x; q, (a, a, \ldots, a))$ tends to infinity as $x$ tends to infinity, and Maynard \cite{maynard2016} strengthened this to $\pi(x; q, (a, a, \ldots, a)) > C\pi(x)$ for some constant $C$ and sufficiently large $x$. 

We can explain the preferences for certain prime patterns by appealing to conjectural statements similar in nature to the PNT. For example, the Hardy-Littlewood prime $k$-tuple conjecture gives us the density of specific tuples such as twin primes $(p, p+2)$ and twin sexy primes $(p, p+2, p+6)$ in a form analogous to that of the PNT. By appropriately combining specific cases of the Hardy-Littlewood prime $k$-tuple conjecture, we obtain conjectures about the density of the patterns modulo $q$. 

As an example of how specific prime tuples relate to prime patterns, consider $q=3$ and the pattern $\vec{a} = (1,1)$. Then we are restricting our consideration to consecutive primes $(p_1, p_2)$ where $p_1 \equiv p_2 \equiv 1 \pmod{3}$. For $m \equiv 1 \pmod{3}$, these patterns include specific tuples of the form $(m, m+6), (m, m+12), (m, m+18),$ and so on. The densities of these specific tuples can be analyzed with the Hardy-Littlewood prime $k$-tuple conjectures. In this manner, we obtain conjectures that partially account for the observed preferences for certain patterns modulo $q$. 

Lemke Oliver and Soundararajan \cite{oliver2016} provide a conjectural explanation for the biases for certain prime patterns. However, their heuristic argument omits lower order terms that cause their conjectured form to not be in agreement with the data at smaller values of $x$. We expand the conjecture to include further terms and begin rigorously connecting the Hardy-Littlewood prime $k$-tuple conjecture and the main conjecture in \cite{oliver2016}.

In Section \ref{sec:def}, we lay out the definitions and notation important to our discussion. In Section \ref{sec:conjec}, we determine the lower order terms by tightening the asymptotics in the heuristic in \cite{oliver2016}. In Section \ref{app:a}, we prove the lemmas necessary for the main proof. In Appendices \ref{sec:numeric} and \ref{app:c}, we account for discarded terms in the asymptotic formula for the conjectured behavior to extend the form of an integral to more closely fit the actual behavior of prime patterns and extend our data gathering capabilities by 8 orders of magnitude. We identify a plausible lower order term for the conjectured formula. 

\section{Preliminaries}\label{sec:def}
We begin with the statement of the Hardy-Littlewood prime $k$-tuple conjecture. Heuristically, the conjecture generalizes the PNT by assuming the probability of an integer $n$ being prime as roughly $\frac{1}{\log n}$. While the integrand is derived by assuming primality is independent, the constant in front of the integral corrects for this assumption. 

\begin{conjec}[Hardy-Littlewood prime $k$-tuple conjecture]
Let $\mathcal{H}$ be a finite set of nonnegative integers and $\pi(x, \mathcal{H})$ denote the number of integers $n \le x$ such that $n+h$ is a prime for all $h$ in $\mathcal{H}$. Furthermore, let $\nu_p(\mathcal{H})$ denote the number of residue classes occupied by the members of $\mathcal{H}$ modulo $p$. Then we have that \[\pi(x, \mathcal{H}) = \mathfrak{S}(\mathcal{H})\int_2^x \frac{dt}{(\log t)^{\abs{\mathcal{H}}}} + O(x^{1/2+\epsilon}),\] where the singular series is defined as \[\mathfrak{S}(\mathcal{H}) = \prod_{p \text{ prime}} \frac{1 - \frac{\nu_p(\mathcal{H})}{p}}{(1 - \frac{1}{p})^{\abs{\mathcal{H}}}}.\]
\end{conjec}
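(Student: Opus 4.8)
The plan is not really to \emph{prove} this statement --- the Hardy--Littlewood conjecture is famously open --- but to record the heuristic derivation of the stated formula, since that probabilistic argument (going back to Cram\'er and Hardy--Littlewood) is exactly the model underlying the rest of the paper. So what I would present is the motivation in three stages: a naive independence count, a prime-by-prime correction, and the conjectural error term.

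First I would set up the naive independence model. By the Prime Number Theorem an integer of size about $t$ should be thought of as prime ``with probability'' $1/\log t$. If one pretends the events $\{n+h \text{ prime}\}$ for $h \in \mathcal{H}$ are independent, the expected value of $\pi(x,\mathcal{H})$ would be $\sum_{2 \le n \le x}(\log n)^{-\abs{\mathcal{H}}}$, which by partial summation equals $\int_2^x (\log t)^{-\abs{\mathcal{H}}}\,dt + O(x^{1/2+\epsilon})$. This already has the right shape and order of magnitude but the wrong leading constant, because congruence conditions modulo small primes create genuine correlations among these events.

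Second, I would correct the model one prime at a time. For a fixed prime $p$, the naive model assigns ``probability'' $(1-1/p)^{\abs{\mathcal{H}}}$ to the event that $p \nmid n+h$ for every $h \in \mathcal{H}$, whereas the true proportion of residues $n \bmod p$ with this property is $(p - \nu_p(\mathcal{H}))/p = 1 - \nu_p(\mathcal{H})/p$, since $n$ must avoid exactly the $\nu_p(\mathcal{H})$ forbidden classes. The multiplicative discrepancy at $p$ is therefore the local factor $\bigl(1 - \nu_p(\mathcal{H})/p\bigr)(1 - 1/p)^{-\abs{\mathcal{H}}}$, and applying all of these corrections at once multiplies the naive count by $\mathfrak{S}(\mathcal{H})$. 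Here I would check that the product converges absolutely: once $p$ exceeds the diameter of $\mathcal{H}$ we have $\nu_p(\mathcal{H}) = \abs{\mathcal{H}}$, the $1/p$ contributions cancel between numerator and denominator, and the local factor is $1 + O_{\mathcal{H}}(1/p^2)$. Combining this global correction with the naive count gives the main term $\mathfrak{S}(\mathcal{H})\int_2^x (\log t)^{-\abs{\mathcal{H}}}\,dt$.

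The error term $O(x^{1/2+\epsilon})$ is the conjectural square-root cancellation familiar from the Riemann Hypothesis: heuristically it would come from an explicit-formula-type expansion in which each zero of the relevant $L$-functions contributes $O(x^{1/2+\epsilon})$. The hard part --- indeed the reason this is a conjecture rather than a theorem --- is that there is no known rigorous bridge from the heuristic to the asymptotic. Sieve methods (Brun, Selberg, and the Goldston--Pintz--Y{\i}ld{\i}r{\i}m and Maynard--Tao refinements) reproduce the correct order of magnitude and yield strong bounded-gap consequences, but the parity obstruction prevents them from pinning down the constant $\mathfrak{S}(\mathcal{H})$, and for $\abs{\mathcal{H}} \ge 2$ no unconditional lower bound of the conjectured size is available. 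Accordingly, this paper takes the conjecture as a hypothesis, and its contribution is to extract from it --- not to establish --- the refined asymptotics for $\pi(x;q,\vec{a})$.
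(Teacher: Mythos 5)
You correctly treat this as an open conjecture rather than something to prove, and your heuristic (pointwise primality probability $1/\log n$, independence, then the prime-by-prime local correction giving $\mathfrak{S}(\mathcal{H})$) is exactly the justification the paper itself sketches before Definition \ref{def:1}; the paper likewise offers no proof and simply adopts the conjecture as a hypothesis. Your write-up is a fuller version of the same approach, so there is nothing to correct.
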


The singular series is modified in \cite{montgomery2004} to an inclusion-exclusion form \[\mathfrak{S}_{0}(\mathcal{H}) = \sum\limits_{\tset \subset \mathcal{H}} (-1)^{\abs{\mathcal{H}\setminus\tset}}\mathfrak{S}(\tset).\] In \cite{oliver2016}, Lemke Oliver and Soundararajan modify the singular series to range over primes $p$ not dividing $q$ to account for the prime patterns modulo $q$ as follows. 

\begin{define}\label{def:1}
The modified singular series $\mathfrak{S}_q(\mathcal{H})$ is defined to be  \[\mathfrak{S}_q(\mathcal{H}) = \prod_{p \nmid q} \frac{1 - \frac{\nu_p(\mathcal{H})}{p}}{(1 - \frac{1}{p})^{\abs{\mathcal{H}}}}.\] 
\end{define}

Lemke Oliver and Soundararajan \cite{oliver2016} introduce the same inclusion-exclusion form $\mathfrak{S}_{q,0}$ involving alternating sums of  $\mathfrak{S}_{q}$ is defined to introduce cancellations that lead to Conjecture \ref{conjec:1}.  

Let $q \ge 3$ be a positive integer and $a$ and $b$ be reduced residue classes modulo $q$. Set $h \equiv b-a \pmod{q}$. Also, let $p_n$ be the $n$th prime. We are specifically interested in the case where $p_n \equiv a \pmod{q}$ and $p_{n+1} = p_n+h$; this guarantees $p_{n+1} \equiv b \pmod{q}$.  Let $1_\mathcal{P}(x)$ be the prime indicator function, defined to be $1$ if $x$ is prime and $0$ otherwise; Lemke Oliver and Soundararajan \cite{oliver2016} start with the statement that \begin{equation}\label{eq:25}
\pi(x;q,a,b) = \sum_{\substack{n \le x \\ n \equiv a \pmod{q}}}1_\mathcal{P}(n)1_\mathcal{P}(n+h) \prod_{\substack{0 < t < h \\ (t+a, q) = 1}}(1-1_\mathcal{P}(n+t)).
\end{equation} Following a series of manipulations and using a conjecture similar to the Hardy-Littlewood prime $k$-tuple conjecture, 
they conjecture the following asymptotic for $\pi(x; q, (a, b))$ (see \cite[E4449--E4450]{oliver2016} for more details).

\begin{conjec}[Lemke Oliver \& Soundararajan  \cite{oliver2016}]\label{conjec:1}
Let  \[\alpha(y) = 1 - \frac{q}{\varphi(q)\log y} \hspace{.25 cm}\text{ and } \hspace{.25cm} \epsilon_q(a,b) = \#\{0<t<h: (t+a,q) =1\} - \frac{\varphi(q)}{q}h.\] Then 
\[\pi(x; q, (a, b)) \sim \frac{1}{q}\int_{2}^{x} \alpha(y)^{\epsilon_q(a,b)}\qty(\frac{q}{\varphi(q)\alpha(y)\log y})^2 \mathcal{D}(a,b;y) dy,\]
where $\mathcal{D}(a,b;y)$ is defined to be \[\sum_{\substack{h >0 \\ h \equiv b-a \pmod{q}}} \sum_{\aset \subset \{0, h\}} \sum_{\substack{\tset \subset [1, h-1] \\ (t+a, q)=1\,\,\forall t \in \tset}} (-1)^{\abs{\tset}} \mathfrak{S}_{q,0}(\aset \cup \tset) \qty(\frac{q}{\varphi(q)\alpha(y)\log y})^{\abs{\tset}} \alpha(y)^{h\varphi(q)/q}.\]

\end{conjec}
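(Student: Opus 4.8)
Since Conjecture~\ref{conjec:1} is itself conditional, the plan is not to prove it outright but to derive it heuristically from \eqref{eq:25} together with a Hardy-Littlewood-type input for primes in arithmetic progressions, tightening the computation of \cite{oliver2016}. Starting from \eqref{eq:25}, summed over all admissible gaps $h>0$ with $h\equiv b-a\Mod q$ (each pair $(p_n,p_{n+1})$ counted by $\pi(x;q,a,b)$ has a well-defined gap $h=p_{n+1}-p_n$), I would first expand the sieve product by inclusion-exclusion,
\[
\prod_{\substack{0<t<h\\(t+a,q)=1}}\bigl(1-1_{\mathcal P}(n+t)\bigr)=\sum_{\tset}(-1)^{\abs{\tset}}\prod_{t\in\tset}1_{\mathcal P}(n+t),
\]
where $\tset$ runs over subsets of $[1,h-1]$ every element $t$ of which satisfies $(t+a,q)=1$, and then interchange the orders of summation. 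The innermost object becomes, with $\mathcal H=\{0,h\}\cup\tset$, the number of $n\le x$ with $n\equiv a\Mod q$ for which $n+h'$ is prime for every $h'\in\mathcal H$. Provided $(a+h',q)=1$ for all $h'\in\mathcal H$ (otherwise that count is $O(1)$), a Hardy-Littlewood conjecture for arithmetic progressions predicts this count to be $\tfrac1q\mathfrak S_q(\mathcal H)\int_2^x\left(\tfrac{q}{\varphi(q)\log t}\right)^{\abs{\mathcal H}}dt+O(x^{1/2+\epsilon})$: the factor $\tfrac1q$ is the density of the progression, each primality condition on a residue coprime to $q$ contributes the local density $\tfrac{q}{\varphi(q)\log t}$, and $\mathfrak S_q(\mathcal H)$ from Definition~\ref{def:1} corrects the correlations at the primes $p\nmid q$, the local factors at the primes $p\mid q$ being absorbed by the congruence $n\equiv a\Mod q$.

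The heart of the derivation is the combinatorial rearrangement that follows. For fixed $h$ one is led to
\[
\sum_{\tset}(-1)^{\abs{\tset}}\mathfrak S_q(\{0,h\}\cup\tset)\left(\tfrac{q}{\varphi(q)\log y}\right)^{\abs{\tset}},
\]
and using the defining relation $\mathfrak S_q(\mathcal H)=\sum_{\mathcal B\subseteq\mathcal H}\mathfrak S_{q,0}(\mathcal B)$, writing $\mathcal B=\aset\cup\tset'$ with $\aset\subseteq\{0,h\}$ and $\tset'\subseteq\tset$, and summing the resulting alternating series over all $\tset\supseteq\tset'$ contained in $S:=\{t: 0<t<h,\ (t+a,q)=1\}$, this collapses to
\[
\alpha(y)^{\abs{S}}\sum_{\aset\subseteq\{0,h\}}\sum_{\tset'\subseteq S}(-1)^{\abs{\tset'}}\mathfrak S_{q,0}(\aset\cup\tset')\left(\tfrac{q}{\varphi(q)\alpha(y)\log y}\right)^{\abs{\tset'}},
\]
since $1-\tfrac{q}{\varphi(q)\log y}=\alpha(y)$ and $\tfrac{q/(\varphi(q)\log y)}{\alpha(y)}=\tfrac{q}{\varphi(q)\alpha(y)\log y}$; in particular the $\alpha(y)$-powers appearing in the $\tset'$-factors of $\mathcal D$ emerge automatically from the geometric sum, not from any extra input. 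Since $\abs{S}=\tfrac{\varphi(q)}{q}h+\epsilon_q(a,b)$, with $\epsilon_q(a,b)$ depending on $h$ only through the residue $b-a\Mod q$, the factor $\alpha(y)^{\abs{S}}$ splits as $\alpha(y)^{\epsilon_q(a,b)}\alpha(y)^{h\varphi(q)/q}$. Pulling $\alpha(y)^{\epsilon_q(a,b)}$, the density $\tfrac1q$, and the squared endpoint factor --- which the naive input produces as $\left(\tfrac{q}{\varphi(q)\log y}\right)^2$ --- outside the sum over $h$, and passing from $\sum_{n\le x}$ to $\int_2^x dy$ by partial summation, what remains inside the integral is exactly $\mathcal D(a,b;y)$; the only mismatch with Conjecture~\ref{conjec:1} is that its endpoint factor is instead $\left(\tfrac{q}{\varphi(q)\alpha(y)\log y}\right)^2$, which is accounted for by the further $\li$-type refinement of the local prime density employed in \cite{oliver2016} and which Section~\ref{sec:conjec} makes precise.

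The main obstacle is not this formal bookkeeping but promoting it to a theorem. Even granting the Hardy-Littlewood conjecture, each tuple count carries an error $O(x^{1/2+\epsilon})$, and the derivation sums over exponentially many subsets $\tset$ and over all gaps $h$, so a naive accounting of the accumulated error dwarfs the main term. A genuine proof would need a Hardy-Littlewood estimate uniform in $\mathcal H$ and averaged over the gap $h$, together with the input that atypically large gaps --- say $h\gg(\log x)^{1+\delta}$ --- contribute negligibly to $\pi(x;q,a,b)$; it is precisely this analytic passage, rather than the derivation of the shape of $\mathcal D(a,b;y)$, that remains out of reach, and the contribution of the present paper is to carry out the tightening of the heuristic and to isolate where the genuinely rigorous input would be required.
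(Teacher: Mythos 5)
This statement is a conjecture quoted from \cite{oliver2016}; the paper offers no proof of it, only a pointer to the heuristic derivation in that reference, so there is nothing internal to compare against beyond that sketch. Your proposal reconstructs essentially that same heuristic --- inclusion--exclusion applied to the sieve product in \eqref{eq:25}, a Hardy--Littlewood-type input for the progression $n \equiv a \Mod{q}$, inversion of $\mathfrak{S}_q$ into $\mathfrak{S}_{q,0}$ followed by the geometric resummation that produces $\alpha(y)^{\abs{S}} = \alpha(y)^{\epsilon_q(a,b)}\alpha(y)^{h\varphi(q)/q}$ and the factors $\bigl(q/(\varphi(q)\alpha(y)\log y)\bigr)^{\abs{\tset}}$ --- and is correct as a heuristic at the same level of rigor as the source; the one loose end, which you flag yourself, is that the endpoint factor should likewise come out as $\bigl(q/(\varphi(q)\alpha(y)\log y)\bigr)^{2}$ from the same renormalization rather than being patched in afterward.
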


We analyze the growth of $\mathcal{D}(a, b;y)$. For readability purposes, define $\log_k x$ to be \\$\underbrace{\log \log \ldots \log}_{k \text{ logs}} x$, where $\log x$ is the natural logarithm. 

\section{A Closer Analysis of the Conjecture}\label{sec:conjec}
We provide more precise asymptotics for $\pi(x; q, (a, b))$ as in Conjecture \ref{conjec:1}. Because $q=2$ is trivial, we only consider the case where $q$ is an odd prime. However, the results readily generalize to composite $q$. In particular, we are interested in $\mathcal{D}(a,b;y)$, which is equal to
\begin{equation}\label{eq:1}
\sum_{\substack{h >0 \\ h \equiv b-a \pmod{q}}} \sum_{\aset \subset \{0, h\}} \sum_{\substack{\tset \subset [1, h-1] \\ (t+a, q)=1\,\,\forall t \in \tset}} (-1)^{\abs{\tset}} \mathfrak{S}_{q,0}(\aset \cup \tset) \qty(\frac{q}{\varphi(q)\alpha(y)\log y})^{\abs{\tset}} \alpha(y)^{h\varphi(q)/q},
\end{equation}
in accordance with \cite{oliver2016}. Lemke Oliver and Soundararajan heuristically argue that the relevant terms in \eqref{eq:1} are those where $\aset = \tset = \varnothing$ and $\abs{\aset} + \abs{\tset} = 2$. 

We convert \eqref{eq:1} into a form more friendly to partitioning by the size of $\tset$. Define for convenience \[z = z(q, y) = \frac{q}{\varphi(q)\alpha(y)\log y}\] and \[g = g(q,y) = \alpha(y)^{\varphi(q)/q}.\] We rewrite the innermost sum of \eqref{eq:1} as a sum over $\ell$ element subsets of $[1, h-1]$ where $\ell$ ranges from $0$ to $h-1$ to obtain \begin{equation}\label{eq:2}
    \mathcal{D}(a,b;y) = 
    \sum_{\substack{h >0 \\ h \equiv b-a \pmod{q}}} g^h \sum_{\aset\subset\{0,h\}}\sum_{\ell=0}^{h-1} (-z)^{\ell} \sum_{\substack{\tset \subset [1, h-1] \\ (t+a, q)=1\,\,\forall t \in \tset\\ \abs{\tset} = \ell} } \mathfrak{S}_{q,0}(\aset \cup \tset).
\end{equation}

Evaluating \eqref{eq:2} is difficult because the terms are unwieldy when $h$ is large. However, recalling the role of $h$ in \eqref{eq:25}, we see that large $h$ correspond to large prime gaps. Lemma \ref{lemma:3} constrains the behavior of large prime gaps and hence of \eqref{eq:2} when $h$ is large. 

Let $c$ be a sufficiently large positive integer depending on $n$ and define $M = c\log_2 y$. We split the outermost sum over $h$ in \eqref{eq:2} into two regions: One with $0 < h \le M\log y$ and one with $h > M\log y$. The sum where $h > M\log y$ counts contributions where $g_n > M\log y$. However, this portion of the sum can only contribute if its terms exist at all, therefore, the sum where $h > M\log y$ is bounded above by the probability that $g_n>M\log y$. Hence, by Lemma \ref{lemma:3}, the sum where $h > M\log y$ is bounded above by $\frac{1}{\log^c y}$. Thus, by controlling $c$, we can discard the portion of the sum where $h > M\log y$. For the remainder of this paper, we consider $h \le M\log y$. 

For $n = 0, 1, 2$, Lemke Oliver and Soundararajan define $\mathcal{D}_n(a,b;y)$ to be the terms obtained from \eqref{eq:1} where $\abs{\tset} = n$ and $\aset = \tset = \varnothing$ or $\abs{\aset} + \abs{\tset} = 2$. However, note that $\mathcal{D}_n(a,b;y)$ is precisely the term obtained by isolating the $\ell = n$ term in \eqref{eq:2}. Starting the sum over $\ell$ in \eqref{eq:2} at $\ell = n$ rather than $\ell = 0$ is the first step towards investigating $\mathcal{D}_n(a,b;y)$. Define  $\mathcal{D}_{\ge n}(a, b; y) = \sum\limits_{i \ge n}^{M\log y} \mathcal{D}_i(a,b;y)$. Written explicitly, the terms of \eqref{eq:2} we are interested in are

\begin{equation}\label{eq:4}
    \mathcal{D}_{\ge n}(a, b;y) = \sum_{\substack{0 < h \le M\log y \\ h \equiv b-a \pmod{q}}} g^h \sum_{\aset \subset \{0, h\}}\sum_{\ell=n}^{h-1} (-z)^{\ell} \sum_{\substack{\tset \subset [1, h-1] \\ (t+a, q)=1\,\,\forall t \in \tset\\ \abs{\tset} = \ell} } \mathfrak{S}_{q,0}(\aset \cup \tset).
\end{equation}

Furthermore, define \begin{align*}
     & A_{h, \ell} = \displaystyle\sum_{\substack{\tset \subset [1, h-1] \\ (t+a, q)=1\\ \abs{\tset} = \ell}} \mathfrak{S}_{q,0}(\tset),      &&B_{h, \ell} = \displaystyle\sum_{\substack{\tset \subset [1, h-1] \\ (t+a, q)=1\\ \abs{\tset} = \ell} } \mathfrak{S}_{q,0}(\{0\} \cup \tset), \\
     &C_{h, \ell} = \displaystyle\sum_{\substack{\tset \subset [1, h-1] \\ (t+a, q)=1\\ \abs{\tset} = \ell} } \mathfrak{S}_{q,0}(\{h\} \cup \tset), 
     &&D_{h, \ell} = \displaystyle\sum_{\substack{\tset \subset [1, h-1] \\ (t+a, q)=1\\ \abs{\tset} = \ell} } \mathfrak{S}_{q,0}(\{0,h\} \cup \tset).
\end{align*}

We partition the summation in \eqref{eq:4} into four terms $S_{\varnothing}$, $S_{\{0\}}$, $S_{\{h\}}$, and $S_{\{0, h\}}$, based on $\aset$.
For example, 
\begin{equation}\label{eq:42}S_{\varnothing} = \sum_{\substack{0 < h < M\log y \\ h \equiv b-a \pmod{q}}} g^h \sum_{\ell=n}^{h-1} (-z)^{\ell} A_{h, \ell},\end{equation} with $S_{\{0\}}$, $S_{\{h\}}$, and $S_{\{0, h\}}$ defined analogously with sums over $B_{h, \ell}$, $C_{h,\ell}$, and $D_{h,\ell}$, respectively.

In order to handle $A_{h, \ell}$, $B_{h, \ell}$, $C_{h, \ell}$, and $D_{h, \ell}$, we modify the following result of Montgomery and Soundararajan \cite{montgomery2004}, which states the average order of $\mathfrak{S}_0$. They show that
\begin{equation}\label{eq:40}
\sum_{\substack{\tset \subset [1, h] \\ \abs{\tset} = \ell}} \mathfrak{S}_0(\tset) = \frac{\mu_{\ell}}{\ell!}(-h\log h+Ah)^{\ell/2} + O(h^{\ell/2-1/7\ell+\epsilon}),
\end{equation}
where $\mu_{\ell}$ is the $\ell^{\text{ th}}$ moment of the standard normal distribution and $A$ is an absolute constant between $-1$ and $0$. We expect that $\sum \mathfrak{S}_{q,0}(\tset)$ has a similar growth rate, up to minor corrections such as the exact value of $A$ and leading factors depending on $q$. Moreover, these arguments used to justify Theorem \ref{thm:1} are expected to be robust against these modifications. 

We prove the following theorem concerning the growth rates of $S_{\varnothing}$, $S_{\{0\}}$, $S_{\{h\}}$, and $S_{\{0, h\}}$, which proves a weaker version of the claim in \cite{oliver2016} that $\mathcal{D}_n(a, b; y)$ is $O_n\qty(\frac{(\log_2y)^{n/2}}{(\log y)^{n/2-1}})$.

\begin{theorem}\label{thm:1}
Assuming that \eqref{eq:40} holds in a similar form for $\mathfrak{S}_{q,0}$, we have that $S_{\varnothing}$,\, $S_{\{0\}}\log y$,\, $S_{\{h\}}\log y$,\, and $S_{\{0, h\}}(\log y)^2$ are all
\[O_n\qty( \frac{(\log_2 y)^{n}}{(\log y)^{n/2-1}}).\] In particular, $\mathcal{D}_{n}(a,b;y)$ and $\mathcal{D}_{\ge n}(a, b;y)$ are both $O_n\qty( \frac{(\log_2 y)^{n}}{(\log y)^{n/2-1}})$, allowing us to truncate \\$\mathcal{D}(a,b;y)$ at specific values of $n$ and control the error terms in Conjecture \ref{conjec:1}. 
\end{theorem}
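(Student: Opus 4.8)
The plan is to bound each of the four partial sums $S_{\varnothing}$, $S_{\{0\}}$, $S_{\{h\}}$, $S_{\{0,h\}}$ separately and then recombine. For each, the strategy is to first estimate the inner quantities $A_{h,\ell}$, $B_{h,\ell}$, $C_{h,\ell}$, $D_{h,\ell}$ using the assumed analogue of \eqref{eq:40}. The key observation is that $A_{h,\ell}$ is essentially the Montgomery--Soundararajan sum \eqref{eq:40} restricted to those $\tset$ avoiding the residue class $-a \pmod q$; since we are in the odd prime modulus case, this restriction cuts the available positions roughly by a factor $\frac{q-1}{q} = \frac{\varphi(q)}{q}$, so I expect $A_{h,\ell} \approx \frac{\mu_\ell}{\ell!}(ch\log h)^{\ell/2} \cdot (\text{const}_q)^\ell$ up to the same-shape error term, with an extra combinatorial factor accounting for the constraint. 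For $B_{h,\ell}$ and $C_{h,\ell}$ I would peel off the element $0$ (resp.\ $h$) via the inclusion--exclusion definition of $\mathfrak{S}_{q,0}$: writing $\mathfrak{S}_{q,0}(\{0\}\cup\tset) = \sum_{\mathcal U\subset \tset}(-1)^{|\tset\setminus\mathcal U|}\mathfrak{S}_{q,0}(\{0\}\cup\mathcal U)$-type relations, or more directly using that $\mathfrak{S}_{q,0}$ of a set containing an endpoint contributes a lower-order amount, I expect $B_{h,\ell}, C_{h,\ell} = O((h\log h)^{(\ell-1)/2}\cdot\text{stuff})$ and $D_{h,\ell}=O((h\log h)^{(\ell-2)/2}\cdot\text{stuff})$; this is the source of the compensating powers of $\log y$ in the theorem statement (the factors $z^\ell$ each carry a $\frac{1}{\log y}$, and the reduced exponent on $h$ in $B,C,D$ translates, after summing over $h$, into the missing powers of $\log y$ that the statement restores by multiplying $S_{\{0\}}$ by $\log y$, etc.).

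Next, with $z = z(q,y) \asymp \frac{1}{\log y}$ and $g = g(q,y) = \alpha(y)^{\varphi(q)/q}$ satisfying $g^h \le \exp(-ch\log_2 y/\log y)$ or thereabouts (since $\alpha(y) = 1 - \frac{q}{\varphi(q)\log y}$, so $\log g \approx -\frac{1}{\log y}$), I would substitute the estimates for $A_{h,\ell}$ into \eqref{eq:42} and sum. The double sum becomes, schematically, $\sum_{0<h\le M\log y} g^h \sum_{\ell=n}^{h-1}(-z)^\ell \frac{\mu_\ell}{\ell!}(c_q h\log h)^{\ell/2}(\text{const})$. I would treat the $\ell$-sum and the $h$-sum in turn: for fixed $h$, the $\ell$-sum is dominated by its first term $\ell = n$ up to the decay from $z\sqrt{h\log h}\ll \sqrt{M\log y \cdot \log_2 y}/\log y = \sqrt{M\log_2 y/\log y}$, which is $o(1)$ once $M = c\log_2 y$; so the $\ell$-sum is $O_n(z^n (h\log h)^{n/2})$. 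Then $\sum_{0<h\le M\log y} g^h (h\log h)^{n/2} z^n$: bounding $\log h \le \log(M\log y) = O(\log_2 y)$ and using $\sum_h g^h h^{n/2} = O_n((1-g)^{-1-n/2}) = O_n((\log y)^{1+n/2})$ (a standard polylog-weighted geometric series estimate), I get $O_n\!\left( z^n (\log_2 y)^{n/2} (\log y)^{1+n/2}\right) = O_n\!\left(\frac{(\log_2 y)^{n/2}(\log y)^{1+n/2}}{(\log y)^n}\right) = O_n\!\left(\frac{(\log_2 y)^{n/2}}{(\log y)^{n/2-1}}\right)$. The statement as written has $(\log_2 y)^n$ rather than $(\log_2 y)^{n/2}$; the extra slack comes from also crudely bounding the $h$-summation range $M\log y$ and the $\log h$ factors, and from the error terms in \eqref{eq:40}, so I would not optimize that and simply absorb everything into $(\log_2 y)^n$. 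Handling the $O(h^{\ell/2-\ell/7+\epsilon})$ error term requires checking it sums to something no larger — since that exponent is strictly below $\ell/2$, the same geometric-series machinery applies and yields a smaller bound.

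For $S_{\{0\}}$ and $S_{\{h\}}$, I repeat the computation with $B_{h,\ell}$, $C_{h,\ell}$ in place of $A_{h,\ell}$; the reduced power $(h\log h)^{(\ell-1)/2}$ produces one fewer power of $\sqrt{h\log h}$, i.e.\ gains a factor $\frac{1}{\sqrt{\log y}}$ twice over in the final count — wait, more precisely it shifts the $h$-sum exponent from $n/2$ to $(n-1)/2$ after the analogous reduction, which changes $(\log y)^{1+n/2}$ to $(\log y)^{1+(n-1)/2} = (\log y)^{(n+1)/2}$, giving $S_{\{0\}} = O_n\!\left(\frac{(\log_2 y)^{n}}{(\log y)^{n/2-1/2}}\right)$, hence $S_{\{0\}}\log y = O_n\!\left(\frac{(\log_2 y)^{n}}{(\log y)^{n/2-1}}\right) \cdot \frac{1}{\sqrt{\log y}} \cdot \sqrt{\log y}$; I will need to track the exponents carefully here so that the stated normalizations $S_{\{0\}}\log y$, $S_{\{h\}}\log y$, $S_{\{0,h\}}(\log y)^2$ land on the common bound. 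For $S_{\{0,h\}}$ with $D_{h,\ell}$ the exponent drops by $1$, costing $(\log y)^2$, which is exactly the compensating factor in the statement. Finally, $\mathcal{D}_n(a,b;y) = S_{\varnothing,(n)} + S_{\{0\},(n)} + S_{\{h\},(n)} + S_{\{0,h\},(n)}$ where each piece is the $\ell=n$ isolate — but since every $S$ is already bounded by the common quantity divided by its normalizing power of $\log y$, and that power is $\ge 1$, each contributes $O_n\!\left(\frac{(\log_2 y)^n}{(\log y)^{n/2-1}}\right)$, and summing the four gives the same; the claim for $\mathcal{D}_{\ge n}$ follows since it is the sum of $\mathcal{D}_i$ for $i \ge n$ and the bound for $\mathcal{D}_i$ is decreasing in $i$ (each extra power of $z$ beats the growth), so the tail is dominated by $\mathcal{D}_n$ up to an $O_n(1)$ factor.

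The main obstacle I anticipate is rigorously transferring \eqref{eq:40} to the $q$-restricted, endpoint-augmented sums $A_{h,\ell}, B_{h,\ell}, C_{h,\ell}, D_{h,\ell}$: the paper only \emph{assumes} the $\mathfrak{S}_{q,0}$-analogue of \eqref{eq:40} for the plain sum over all $\tset$, so I would need a separate combinatorial argument (likely a generating-function or inclusion--exclusion identity relating the constrained sum to unconstrained sums over smaller $h$, of the flavor used by Montgomery--Soundararajan) to handle both the coprimality restriction $(t+a,q)=1$ and the forced inclusion of $0$ and/or $h$. Getting the endpoint-reduction right — that adjoining a boundary point to $\tset$ genuinely lowers the effective exponent by one half (and by one full unit when both endpoints are adjoined) rather than leaving it unchanged — is the crux, and is exactly what forces the $\log y$, $\log y$, $(\log y)^2$ normalizations; I expect this to consume most of the proof, with the subsequent summation over $h$ and $\ell$ being routine polylog-weighted geometric-series estimates of the kind sketched above.
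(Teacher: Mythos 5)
Your treatment of $S_{\varnothing}$ is sound and in fact streamlines the paper's argument: instead of the paper's block decomposition of the $h$-range into $j,k$ windows and the four cases $1A$--$1D$, you substitute the assumed asymptotic for $A_{h,\ell}$, note the $\ell$-sum is dominated by $\ell=n$ since $z\sqrt{h\log h}=o(1)$ in the range $h\le M\log y$, and close with $\sum_h g^h h^{n/2}\ll (1-g)^{-n/2-1}\asymp(\log y)^{n/2+1}$; this even lands on the sharper $(\log_2 y)^{n/2}$ rather than the paper's $(\log_2 y)^n$.

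The genuine gap is in $S_{\{0\}}$, $S_{\{h\}}$, $S_{\{0,h\}}$, i.e.\ exactly the part that produces the compensating $\log y$, $\log y$, $(\log y)^2$ in the statement, and you explicitly defer it ("I expect this to consume most of the proof"). The paper's mechanism is Lemma \ref{lemma:4}: using translational invariance of $\mathfrak{S}_{q,0}$ and partitioning subsets by their maximum (resp.\ minimum, resp.\ diameter), one gets the telescoping identities $B_{h-1,\ell-1}=C_{h-1,\ell-1}=A_{h,\ell}-A_{h-1,\ell}$ and $D_{h-1,\ell-1}=A_{h,\ell}-2A_{h-1,\ell}+A_{h-2,\ell}$; the endpoint-augmented sums are thus first and second finite differences of $A_{h,\ell}$, which the paper then compares with $\partial_h A_{h,\ell}$ and $\partial_h^2 A_{h,\ell}$ of the assumed asymptotic, each differencing lowering the power of $h$ by one and hence, after the $h$-summation against $g^h$, costing a factor of roughly $\log y$. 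Your two proposed substitutes do not supply this: the inclusion--exclusion relation you write for $\mathfrak{S}_{q,0}(\{0\}\cup\tset)$ in terms of subsets $\mathcal U\subset\tset$ is not a valid identity for $\mathfrak{S}_{q,0}$, and the assertion that a singular series of a set containing an endpoint is individually "lower order" is false pointwise --- the gain exists only on average, through the cancellation that the telescoping makes visible. Relatedly, your exponent bookkeeping as written gives $S_{\{0\}}=O_n\qty(\frac{(\log_2 y)^n}{(\log y)^{n/2-1/2}})$, so $S_{\{0\}}\log y$ overshoots the claimed bound by about $(\log y)^{1/2}$; with the differencing identities in hand (and attention to the parity of $\ell$, since the main term of $A_{h,\ell+1}$ vanishes for $\ell$ even) the correct power is recovered, but without them the theorem's normalizations cannot be established. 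Your closing reductions of $\mathcal{D}_n$ and $\mathcal{D}_{\ge n}$ to the four $S$'s are consistent with the paper's.
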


 We defer the proofs of Lemmas \ref{lemma:1}-\ref{lemma:4} used in the proof of Theorem \ref{thm:1} to Section \ref{app:a}.
 
\begin{proof}

We begin by evaluating $S_{\varnothing}$ according to \eqref{eq:42}. We are interested in the case where $q$ is prime, and thus $\varphi(q) = q-1$ and $\alpha(y) = 1 - \frac{q}{(q-1)\log y}$. Because $q$ is an odd prime, $\frac{\varphi(q)}{q} \ge \frac{2}{3}$. Thus, \[1 - \frac{3}{2\log y} \le \alpha(y) < 1 - \frac{1}{\log y}.\] For sufficiently large $y$, the definition of $z$ gives \[z = \frac{q}{\varphi(q)\alpha(y)\log y} < \frac{3}{2(1-\frac{3}{2\log y})\log y} < \frac{3}{\frac{3}{2}\log y} = \frac{2}{\log y }.\]  

Appealing to our conjectured form for $\sum \mathfrak{S}_{q,0}$ according to \eqref{eq:40}, we replace $A_{h, \ell}$ in \eqref{eq:42} with  $\frac{\mu_{\ell}}{\ell!}(-h\log h + Ah)^{\ell/2} + O(h^{\ell/2 - 1/7\ell + \epsilon})$. Note that $\mu_\ell =0$ when $\ell$ is odd, so we analyze the sum based on the parity of $\ell$. 

\noindent\emph{Case 1:} $\ell$ is even. For convenience, define $m = \ell/2$. We split the single sum over $h$ into a sum over $j$, $k$ and $h$ and swap the order of summation so that $\eqref{eq:42}$ is less than \begin{equation}\label{eq:5}\sum_{m = \frac{n}{2}}^{M\log y-1} \sum_{j = 0}^{\frac{M }{\log_3 y}} \sum_{k = j\log_3 y}^{(j+1)\log_3 y-1}
    \sum_{\substack{h = k\log y+1\\ h \equiv b-a \pmod{q}}}^{(k+1)\log y} g^h \qty(\frac{2}{\log y})^{2m}\qty(\frac{\mu_{2m}}{(2m)!}(-h\log h + Ah)^{m} ). 
\end{equation}

We bound \eqref{eq:5} above by a series of substitutions. Define $B = -A > 0$ and take the absolute value of the terms of \eqref{eq:5}. Lemma \ref{lemma:1} implies $(h\log h + Bh)^{m}$ has an upper bound of  $2^m[(h\log h)^m + (Bh)^m]$, where we include the extra factor of $2$ for convenience. Because $g = \qty(1 - \frac{q}{\varphi(q)\log y})^{\varphi(q)/q}$ and $\frac{\varphi(q)}{q} \ge \frac{2}{3}$, We have \[g < \qty(1 - \frac{3}{2\log y})^{2/3} < e^{-2/(3\log y)}.\] Thus, $g^h$ has an upper bound of $e^{-2j\log_3 y\log y/(3\log y)} = (\log_2 y)^{-2j/3}$. We then maximize all instances of $h$ by replacing $h$ with $h_{\max} = (k+1)\log y$ and remove the sum over $h$ by multiplying the summand by $\log y$. Finally, note that $\mu_{2m} = (2m-1)!!$, so $\frac{\mu_{2m}}{(2m)!} = \frac{1}{2^m m!}$. These substitutions yield  \begin{equation}\label{eq:6}
\sum_{m = \frac{n}{2}}^{M\log y-1} \sum_{j = 0}^{\frac{M}{\log_3 y}} \sum_{k = j\log_3 y}^{(j+1)\log_3 y-1} \frac{(\log y)^{1-2m}}{(\log_2 y)^{2j/3}}\qty(\frac{2^{2m}}{2^m m!}(2^m[(h_{\max}\log h_{\max})^m + (Bh_{\max})^{m}])). 
\end{equation}

Applying Lemma \ref{lemma:1} to $(\log h_{\max})^m = (\log(k+1) + \log_2 y)^m$ implies  \begin{equation}\label{eq:7}(h_{\max}\log h_{\max})^m \le \qty(2(k+1)\log y)^m[(\log(k+1))^m + (\log_2 y)^m].\end{equation} Substituting \eqref{eq:7} into \eqref{eq:6}, distributing the factor of $(2/\log y)^{2m}$, and cancelling the factor of $2^m$ yields \begin{multline*}
\sum_{m = \frac{n}{2}}^{M\log y-1} \sum_{j = 0}^{\frac{M}{\log_3 y}} \sum_{k = j\log_3 y}^{(j+1)\log_3 y-1} \frac{\log y}{(\log_2 y)^{2j/3}}\Bigg(\frac{1}{m!}\bigg[\qty(\frac{8(k+1)\log(k+1)}{\log y})^m \\+ \qty(\frac{8(k+1)\log_2 y}{\log y})^m\bigg]\Bigg).
\end{multline*}

Again, we maximize $k$ and remove the sum over $k$ by multiplying by $\log_3 y$, leaving
\begin{multline}\label{eq:13}
\sum_{m = \frac{n}{2}}^{M\log y-1} \sum_{j = 0}^{\frac{M}{\log_3 y}}  \frac{\log y\log_3 y}{(\log_2 y)^{2j/3}}\Bigg(\frac{1}{m!}\bigg[\qty(\frac{8((j+1)\log_3 y)(\log(j+1) + \log_4 y))}{\log y})^m  \\ +\qty(\frac{8(j+1)\log_2 y\log_3 y}{\log y})^m\bigg]\Bigg).
\end{multline}

We split the sum in \eqref{eq:13} up into four cases based on the value of $j$. 

\noindent\emph{Case 1A:} $j = 0$. 
When $j=0$, the sum in \eqref{eq:13} becomes
\begin{equation}\label{eq:33}
\log y\log_3 y\sum_{m=\frac{n}{2}}^{M\log y-1} \frac{1}{m!}\qty[\qty(\frac{8\log_3y\log_4y}{\log y})^m + \qty(\frac{8\log_2y\log_3y}{\log y})^m].
\end{equation} 

Note that \eqref{eq:33} is a truncated Taylor polynomial of $e^x$. We show that \eqref{eq:33} is $O(f(n))$, where $f(n)$ is the first term of the truncated Taylor polynomial. With this in mind, because the summation in \eqref{eq:33} is a truncated series of positive terms, it is less than the value of the complete Taylor series 
$e^{8\log_3 y\log_4 y/\log y} + e^{8\log_2 y\log_3y/\log y}$. 

Simplifying and noting that $(\log_a y)^b$ is $O(\log y)$ for any $a \ge 2$ and $b \ge 0$, Lemma \ref{lemma:2}, whose statement and proof can be found in Appendix \ref{app:a}, implies that the expression is $O(1)$. Because the Taylor series is $O(1)$, the growth rate of \eqref{eq:33} for varying $n$ is determined by the first term. Hence, \eqref{eq:33} is \begin{equation}\label{eq:34}
O_n\qty(\frac{(\log_2y)^{n/2}(\log_3y)^{n/2+1}}{(\log y)^{n/2-1}}). 
\end{equation}

\noindent\emph{Case 1B:} $j = 1$. 
Analyzing the $j=1$ term follows similar logic; the asymptotic we obtain is also \begin{equation}\label{eq:35}
O_n\qty(\frac{(\log_2y)^{n/2}(\log_3y)^{n/2+1}}{(\log y)^{n/2-1}}).
\end{equation}

\noindent\emph{Case 1C:} $2 \le j < \frac{3\log_2y}{2\log_3 y}$.
Because $j \ge 2$, we know \[(\log_2 y)^{-2j/3} < (\log_2 y)^{-4/3} < \frac{1}{\log_2 y}.\] We also know that $j+1 \le \frac{3\log_2 y}{2\log_3 y}$. Maximizing $(\log_2 y)^{-2j/3}$ and $j+1$, removing the summation by multiplying by $\frac{3\log_2 y}{2\log_3 y}$, and cancelling $\log(\frac{\log_2 y}{\log_3 y})$ with $\log_4 y$ yields \begin{equation}\label{eq:14}
\frac{3\log y}{2} \sum_{m=\frac{n}{2}}^{M\log y-1}\frac{1}{m!} \qty[ \qty(\frac{12\log_2 y\log_3 y}{\log y})^m + \qty(\frac{12(\log_2 y)^2}{\log y})^m].
\end{equation}

As before, the sum in \eqref{eq:14} is a truncated Taylor series that is $O(1)$. Hence, \eqref{eq:14} is 
\begin{equation}\label{eq:16}
O_n\qty(\frac{(\log_2y)^{n}}{(\log y)^{n/2-1}}).
\end{equation}

\noindent\emph{Case 1D:} $\frac{3\log_2 y}{2\log_3 y} \le j \le \frac{M}{\log_3 y}$. 
When $j > \frac{3\log_2 y}{2\log_3 y}$, the factor $(\log_2 y)^{-2j/3}$ is no greater than  $(\log_2 y)^{-\log_2 y/\log_3 y} = \frac{1}{\log y}$. Substituting for $(\log_2 y)^{-j}$ with $\frac{1}{\log y}$ and $j+1$ with $\frac{M}{\log_3 y}$, which is allowed because $\frac{M}{\log_3 y} + 1$ is the same size as $\frac{M}{\log_3 y}$, the summation in \eqref{eq:13} becomes, after simplification,
\begin{equation}\label{eq:17}
\sum_{m = \frac{n}{2}}^{M\log y-1} \sum_{j = \frac{\log_2 y}{\log_3 y}}^{\frac{M}{\log_3 y}}  \log_3 y\Bigg(\frac{1}{m!}\bigg[\qty(\frac{8M\log M}{\log y})^m  +\qty(\frac{8M\log_2 y}{\log y})^m\bigg]\Bigg).
\end{equation}

We remove the summation in \eqref{eq:17} by multiplying the summand by $\frac{M}{\log_3 y}$, truncate the resulting Taylor series, and apply Lemma \ref{lemma:2} to obtain the final contribution from this case as 
\begin{equation}\label{eq:18}
\frac{(8c)^{n/2}}{(n/2)!}\qty[\frac{(c\log_2 y)^{n/2+1}(\log_3 y)^{n/2}}{(\log y)^{n/2}} + \frac{(\log_2 y)^{n+1}}{(\log y)^{n/2}}] =  O_n\qty(\frac{(\log_2y)^{n+1}}{(\log y)^{n/2}}).
\end{equation}

\noindent\emph{Case 2:} $\ell$ is odd.
We proceed analogously to the even $\ell$ case, noting that if an arbitrary function $f$ is $O(h^{\ell/2-1/7\ell+\epsilon})$, then $f$ is also $ O(h^{\ell/2})$. Therefore, for odd $\ell$, \eqref{eq:42} is less than \begin{equation}\label{eq:8}\sum_{k = 0}^{M-1}
    \sum_{\substack{h = k\log y+1\\ h \equiv b-a \pmod{q}}}^{(k+1)\log y} g^h \sum_{\substack{\ell=n \\ \ell \text{ odd}}}^{h-1} (-z )^{\ell}O(h^{\ell/2}). 
\end{equation}
For $\ell \in [0, M-1]$, let $C_{\ell}$ be the implied constant in the $O(h^{\ell/2})$ term. Defining $C_{\max} = \max\{C_\ell\}$ allows us to pull $-C_{\max}$ out of the sum and remove the big $O$ notation. We also switch the order of sums in \eqref{eq:8} to obtain 
\begin{equation}\label{eq:9}
-C_{\max}\sum_{\substack{\ell = n \\ \ell \text{ odd}}}^{M\log y} \sum_{h > \max\{\ell, \log y\}}^{M\log y}g^h z^{\ell} h^{\ell/2}. 
\end{equation}

Since $h \ge \log y$, we know $g^h \le e^{-2h/3\log y}$. It thus follows that $z < \frac{2}{\log y}$ and $h \le M\log y$. Thus, maximizing $g^h$, $z^{\ell}$, and $h^{\ell/2}$ implies that \eqref{eq:9} has an upper bound of \begin{equation*}\label{eq:10}
-C_{\max}\sum_{\substack{\ell = n \\ \ell \text{ odd}}}^{M\log y} \qty(\frac{2}{\log y})^{\ell} (M\log y)^{\ell/2}\sum_{h > \max\{\ell, \log y\}}^{M\log y}e^{-2h/3\log y}.
\end{equation*} The sum over $h$ is a geometric series that is less than $\frac{e^{-2/3}}{1-e^{-2/3\log y}}$, which is less than $\log y$ for $\log y > 1$. Next, we distribute the $\qty(\frac{2}{\log y})^{\ell}$ into $(M\log y)^{\ell/2}$ and sum the resulting geometric series; this yields \begin{equation}\label{eq:11}
-C_{\max}\log y \frac{(4M/\log y)^{n/2}(1 - (4M/\log y)^{M\log y+1})}{1 - M/\log y}.
\end{equation}
For large $y$, both $1 - (M/\log y)^{M\log y+1}$ and $1 - M/\log y$ are $O(1)$. Thus, \eqref{eq:11} becomes \begin{equation}\label{eq:12}
-C_{\max} \frac{M^{n/2}}{(\log y)^{n/2-1}} = O_n\qty(\frac{(\log_2 y)^{n/2}}{(\log y)^{n/2-1}}).
\end{equation}

Note that for sufficiently large $y$, each of the cases based on $j$ are smaller than \eqref{eq:16}. Thus, the contributions from Cases 1A, 1B, 1D, and 2 as stated in \eqref{eq:34}, \eqref{eq:35}, \eqref{eq:18}, and \eqref{eq:12}, respectively, are all smaller than the contribution from Case 1C as stated in \eqref{eq:16}. Therefore,  $S_{\varnothing} = O_n\qty( \frac{(\log_2 y)^{n}}{(\log y)^{n/2-1}}),$ as desired.

Lemma \ref{lemma:4} implies that summations of $B_{h,\ell}$, $C_{h, \ell}$, or $D_{h, \ell}$ are closely related to summations of $A_{h, \ell}$. In order to take advantage of the cancellation suggested by the form $B_{h-1, \ell-1} = A_{h, \ell} - A_{h-1, \ell}$, we consider the sign of $A_{h, \ell}'' = \frac{\partial^2}{\partial h^2} A_{h, \ell}$. Namely, if $A_{h, \ell}'' > 0$, then \[A'_{h-1, \ell} < A_{h, \ell} - A_{h-1, \ell} < A'_{h, \ell}.\] Otherwise, if $A_{h, \ell}'' < 0$, then \[A'_{h-1, \ell} > A_{h, \ell} - A_{h-1, \ell} > A'_{h, \ell}.\]  Regardless of the sign of $A''_{h, \ell}$, we insert the appropriate upper bound given by either $A'_{h, \ell}$ or $A'_{h-1, \ell}$ into $S_{\{0\}}$ and $S_{\{h\}}$. In evaluating $S_{\{0, h\}}$, we take $A'''_{h, \ell}$ and use appropriate bounds for $A_{h, \ell} - 2A_{h-1, \ell} + A_{h-2, \ell}$.

We proceed to evaluate $S_{\{0\}}$, $S_{\{h\}}$, and $S_{\{0, h\}}$ in an analogous manner to the method of evaluating $S_{\varnothing}$. In loose terms, taking $k$ derivatives of $A_{h, \ell}$ corresponds to adding a factor of $(\log y)^k$ to the denominator of the asymptotic in Theorem \ref{thm:1}, thus leading to $S_{\{0\}}\log y$, $S_{\{h\}}\log y$, and $S_{\{0, h\}}(\log y)^2$. 

Recall that from the definition of $S_{\varnothing}$, $S_{\{0\}}$, $S_{\{h\}}$, and $S_{\{0, h\}}$, the relevant contribution to $\mathcal{D}_{\ge n}(a,b;y)$, after discarding terms where $h > M\log y$, is $S_{\varnothing} + S_{\{0\}} + S_{\{h\}} +S_{\{0, h\}}$. Therefore, $\mathcal{D}_{\ge n}(a, b; y)$ is $O_n\qty( \frac{(\log_2 y)^{n}}{(\log y)^{n/2-1}})$ as well. Since \[\mathcal{D}_n(a,b;y) = \mathcal{D}_{\ge n+1}(a,b;y) - \mathcal{D}_{\ge n}(a,b;y),\] it is also $O_n\qty( \frac{(\log_2 y)^{n}}{(\log y)^{n/2-1}})$. Thus, the theorem is proved. 
\end{proof}

\section{Proofs of the Lemmas}\label{app:a}

We now prove the lemmas that were used to prove the main theorem. 

\begin{lemma}\label{lemma:1}
Let $a$ and $b$ be nonnegative real numbers and $n$ be a positive integer. Then \[(a+b)^n \le 2^{n-1}(a^n+b^n).\] 
\end{lemma}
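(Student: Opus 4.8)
The plan is to prove the inequality by exploiting the convexity of the map $t \mapsto t^n$ on $[0,\infty)$, which holds for every positive integer $n$. Applying the definition of convexity (equivalently, Jensen's inequality) at the two points $a$ and $b$ with equal weights $\tfrac12$ gives $\left(\tfrac{a+b}{2}\right)^n \le \tfrac12 a^n + \tfrac12 b^n$, and multiplying through by $2^n$ yields exactly $(a+b)^n \le 2^{n-1}(a^n+b^n)$. If one prefers to avoid invoking convexity as a black box, the same result follows by a short induction on $n$: the base case $n=1$ is the identity $a+b = a+b$, and for the inductive step one writes $(a+b)^{n+1} = (a+b)(a+b)^n \le 2^{n-1}(a+b)(a^n+b^n)$ using the inductive hypothesis, so it suffices to check that $(a+b)(a^n+b^n) \le 2(a^{n+1}+b^{n+1})$.

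That last inequality reduces, after expanding, to $a^n b + a b^n \le a^{n+1} + b^{n+1}$, i.e.\ to $0 \le (a-b)(a^n - b^n)$; since $t \mapsto t^n$ is nondecreasing on $[0,\infty)$, the factors $a-b$ and $a^n-b^n$ always share a sign, so the product is nonnegative. (This is essentially the $n=1$ rearrangement-type inequality, and it is also the only place where the nonnegativity hypothesis on $a$ and $b$ enters.) Either route completes the proof; I would present the convexity argument as the main line and remark on the induction as an elementary alternative.

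There is essentially no substantive obstacle here: the only thing to watch is the edge behavior — the statement is trivial if $a=0$ or $b=0$, since it becomes $b^n \le 2^{n-1}b^n$, which holds because $n \ge 1$ — and equality occurs precisely when $a=b$, consistent with both arguments. I would also note in passing that the constant $2^{n-1}$ is sharp, attained at $a=b$, though sharpness is not needed for the applications of this lemma in the proof of Theorem \ref{thm:1}.
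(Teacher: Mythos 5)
Your proposal is correct and its main line — convexity of $t \mapsto t^n$ plus Jensen's inequality at equal weights, then clearing the factor of $2^n$ — is exactly the argument given in the paper. The inductive alternative and the remarks on sharpness are fine but not needed.
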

\begin{proof}
Since $x^n$ is convex for nonnegative $x$ and positive integers $n$, Jensen's inequality yields $(\frac{a}{2} + \frac{b}{2})^n \le \frac{1}{2}a^n + \frac{1}{2}b^n$. Clearing denominators gives $(a+b)^n \le 2^{n-1}(a^n+b^n)$, as desired. 
\end{proof}

\begin{lemma}\label{lemma:2}
For any real constants $a$ and $c$, we have \[\lim\limits_{x\to\infty} (\log x)^{c(\log_2 x)^a/\log x} = 1.\]
\end{lemma}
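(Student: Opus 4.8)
The plan is to reduce the statement to a standard growth comparison by taking logarithms, then recover the limit by continuity of the exponential. Set $L(x) = (\log x)^{c(\log_2 x)^a/\log x}$. Taking the natural logarithm and using $\log(\log x) = \log_2 x$, we get
\[
\log L(x) = \frac{c(\log_2 x)^a}{\log x}\cdot \log_2 x = \frac{c\,(\log_2 x)^{a+1}}{\log x}.
\]
Thus it suffices to show that $\dfrac{(\log_2 x)^{b}}{\log x}\to 0$ as $x\to\infty$ for every real $b$, since we may apply this with $b = a+1$ and multiply by the fixed constant $c$.

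To prove that auxiliary fact, I would split on the sign of $b$. If $b \le 0$, the numerator $(\log_2 x)^b$ is bounded (in fact tends to $0$ or is eventually decreasing), while $\log x \to \infty$, so the quotient tends to $0$ immediately. If $b > 0$, substitute $u = \log x$, which tends to $\infty$; the claim becomes $\dfrac{(\log u)^b}{u}\to 0$, which is the familiar statement that a positive power of a logarithm grows more slowly than its argument — provable, for instance, by writing $(\log u)^b / u = \big((\log u)/u^{1/(2b)}\big)^{b}\cdot u^{-1/2}$ and using $\log u = o(u^{\delta})$ for any $\delta>0$, or by applying L'Hospital's rule $\lceil b\rceil$ times.

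Finally, since $\log L(x)\to 0$ and $\exp$ is continuous, $L(x) = \exp(\log L(x)) \to \exp(0) = 1$, which is exactly the assertion of the lemma. There is no real obstacle here: the only point requiring a moment's care is that $a+1$ may be negative, but as noted both signs reduce to the same elementary limit, so the argument goes through uniformly.
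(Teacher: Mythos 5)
Your proof is correct and follows essentially the same route as the paper: take logarithms, reduce to the elementary fact that powers of $\log_2 x$ grow more slowly than $\log x$, and conclude by continuity of the exponential. If anything, you are slightly more careful than the paper in tracking the exponent $a+1$ and the sign cases, but the underlying argument is the same.
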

\begin{proof}
Since the limit \[L = \lim_{x\to\infty} (\log x)^{(\log_2 x)^a/\log x}\] does not depend on $c$, we set $c = 1$; if we prove $L = 1$ then certainly $L^c=1$ and the lemma follows. Taking logarithms, it suffices to show that \[\lim_{x\to\infty} \frac{(\log_2 x)^a}{\log x} = 0.\] However, any power of $\log_2 t$ grows slower than $\log t$ for all sufficiently large $t$, so the limit indeed equals $0$. The lemma is thus proved.
\end{proof}

\begin{lemma}\label{lemma:3}
Let $N$ be a real number and $\pr[g_n > x]$ denote the probability that the gap $g_n$ between the $n$th and $(n+1)$th prime is greater than $x$ for $1 \le n \le N$. Then \[\lim_{N \to\infty} \pr[g_n>c\log_2 p_N\log p_n] < \frac{1}{(\log N)^{c}}.\]
\end{lemma}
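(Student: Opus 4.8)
The statement is a large-deviation bound for prime gaps, and the natural model is the Cramér (or more conservatively, Cramér–Granville) heuristic: near $p_n$, an integer $m$ is ``prime'' independently with probability about $1/\log m \approx 1/\log p_n$. Under this model the probability that there is no prime in an interval of length $x$ starting just after $p_n$ is roughly $(1-1/\log p_n)^x \approx e^{-x/\log p_n}$. So I would first fix $n$ and estimate $\pr[g_n > x]$ for a single $n$ by this exponential. Setting $x = c\log_2 p_N \log p_n$ gives, for that one $n$,
\[
\pr[g_n > c\log_2 p_N \log p_n] \approx e^{-c\log_2 p_N} = (\log p_N)^{-c}.
\]
The key point is that this bound is uniform in $n$ for $1 \le n \le N$, since the exponent $x/\log p_n = c\log_2 p_N$ no longer depends on $n$ (that is exactly why the factor $\log p_n$ was inserted into the threshold).

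**Passing from one $n$ to the supremum/limit.** The quantity $\pr[g_n > x]$ in the lemma is, reading the statement charitably, the probability over a uniformly chosen $n \in \{1,\dots,N\}$ (equivalently the density of ``bad'' indices), or else the probability that \emph{some} such $n$ is bad; in either reading a union bound over $n$ costs at most a factor $N$, and then one must check $N \cdot (\log p_N)^{-c}$ still behaves well — here one should instead be slightly more careful and note that for the density interpretation no union-bound factor appears at all, giving the clean bound $(\log p_N)^{-c}$ directly, and the strict inequality in the limit comes from the lower-order slack in $e^{-c\log_2 p_N} < (\log N)^{-c}$ once $p_N$ is compared to $N$. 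Since $p_N \sim N\log N$ by PNT, $\log p_N = \log N + \log_2 N + o(1) \ge \log N$, so $(\log p_N)^{-c} \le (\log N)^{-c}$, which furnishes the desired strict inequality in the limit (strictness being safe because $\log p_N > \log N$ for all large $N$).

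**Steps in order.** First, invoke the probabilistic prime model to write $\pr[g_n > x] \le \exp(-x/\log p_n)(1+o(1))$ uniformly for $n \le N$; alternatively, if one wants an unconditional flavor, cite a Cramér-type upper bound or simply work inside the heuristic framework the paper already adopts for Conjecture \ref{conjec:1}. Second, substitute $x = c\log_2 p_N\log p_n$ to collapse the $n$-dependence and get $\pr[\,\cdot\,] \le (\log p_N)^{-c}(1+o(1))$. Third, use $p_N \sim N\log N$ (PNT) to replace $\log p_N$ by $\log N$, absorbing the $(1+o(1))$ and the gap between $\log p_N$ and $\log N$ into the strict inequality. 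Fourth, take $N \to \infty$.

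**Main obstacle.** The genuine difficulty is that an unconditional uniform-in-$n$ upper bound of the shape $\pr[g_n > x] \ll e^{-x/\log p_n}$ is not known (it would dwarf all current results on prime gaps), so the proof must either (a) be explicitly conditional on a Cramér-type hypothesis, matching the heuristic status of Conjecture \ref{conjec:1}, or (b) be read as a statement about the random model itself rather than the true primes. I would state this dependence clearly at the outset. The secondary, purely technical obstacle is handling the two possible readings of ``$\pr[g_n > x]$'' (density of bad $n$ versus existence of a bad $n$) so that no spurious factor of $N$ appears; the density reading is the one that makes the stated bound $(\log N)^{-c}$ exactly correct, and I would adopt it, noting that the interval-length threshold was chosen precisely to make the per-$n$ bound $n$-independent so that averaging over $n \le N$ changes nothing.
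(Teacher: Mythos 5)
Your proposal reaches the same bound by essentially the same computation, but it rests on a different key input than the paper. You derive the per-gap tail estimate $\pr[g_n > x] \approx e^{-x/\log p_n}$ from the Cramér independence model, whereas the paper instead cites Gallagher's theorem, which (conditionally on a uniform form of the Hardy--Littlewood conjecture) gives directly the \emph{density} statement that the proportion of $n \le N$ with $g_n > \lambda\log p_n$ is about $e^{-\lambda}$. That choice buys two things you had to argue by hand: first, the density-versus-union-bound ambiguity you discuss disappears, because Gallagher's statement is already a statement about the proportion of bad indices $n \le N$, so no factor of $N$ can arise; second, the hypothesis is the same Hardy--Littlewood-type assumption the paper already uses for Conjecture \ref{conjec:1}, rather than the strictly heuristic full independence of the Cramér model, which (as you correctly flag) is not available unconditionally. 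After that, both arguments are identical: substitute $\lambda = c\log_2 p_N$ so the exponent is $n$-independent, obtain $(\log p_N)^{-c}$, and compare $\log p_N$ with $\log N$ --- you invoke $p_N \sim N\log N$ from the PNT, while the paper gets by with the trivial $p_N > N$. One caveat applies equally to both versions: Gallagher's theorem (and your Cramér estimate) is an asymptotic for \emph{fixed} $\lambda$, while here $\lambda = c\log_2 p_N$ grows with $N$, so uniformity in $\lambda$ is being assumed; the paper explicitly labels its proof a non-rigorous sketch for this reason, and your write-up would need the same disclaimer.
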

We sketch the details of the proof here. Although not fully rigorous, we expect the key ingredients of the proof to be present. 
\begin{proof}
In the following proof, we omit the limits as $N$ goes to infinity for readability. 
Gallagher \cite{gallagher1976distribution} showed that
\[\pr[1 \le n \le N \mid g_n > \lambda\log p_n] < e^{-\lambda}.\] Setting $\lambda = c\log_2 p_N$ implies $\pr[g_n > c\log_2 p_N\log p_n] < \frac{1}{(\log p_N)^c}$. It is clear that $p_N > N$; hence \[\frac{1}{(\log p_N)^c} < \frac{1}{(\log N)^c}.\] Thus, 
\[\pr[g_n > c\log_2 p_N \log p_n] < \frac{1}{(\log N)^{c}},\]
as desired.
\end{proof}

\begin{lemma}\label{lemma:4}
The sums over subsets of $[1, h-1]$ of size $\ell$, given by $A_{h,\ell}$, $B_{h, \ell}$, $C_{h, \ell}$, and $D_{h, \ell}$, satisfy the following relations:
\begin{align*}
    &B_{h-1, \ell-1} = C_{h-1, \ell-1} = A_{h, \ell} - A_{h-1, \ell},\\
    &D_{h-1, \ell-1} = A_{h, \ell} - 2A_{h-1, \ell} + A_{h-2, \ell}.
\end{align*}
\end{lemma}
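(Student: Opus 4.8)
The plan is to prove each identity by a direct combinatorial accounting of which subsets $\tset \subset [1,h-1]$ with $|\tset| = \ell-1$ and $(t+a,q)=1$ for all $t \in \tset$ contribute to each side, using the definition of the four quantities $A_{h,\ell}, B_{h,\ell}, C_{h,\ell}, D_{h,\ell}$ and the inclusion–exclusion definition of $\mathfrak{S}_{q,0}$. The key observation is that $B_{h,\ell}$, $C_{h,\ell}$, $D_{h,\ell}$ differ from $A_{h,\ell}$ only by the forced inclusion of the extra point(s) $0$ and/or $h$ in the argument of $\mathfrak{S}_{q,0}$, while the summation set stays $[1,h-1]$. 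So everything reduces to comparing index sets of the form $[1,h-1]$, $[1,h-2]$, $[1,h-3]$.

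First I would establish the identity $B_{h-1,\ell-1} = A_{h,\ell} - A_{h-1,\ell}$. Start from $A_{h,\ell} = \sum_{\tset} \mathfrak{S}_{q,0}(\tset)$ over admissible $\ell$-subsets of $[1,h-1]$, and split this sum according to whether $h-1 \in \tset$ or not. The subsets not containing $h-1$ are exactly the admissible $\ell$-subsets of $[1,h-2]$, whose contribution is $A_{h-1,\ell}$. Hence $A_{h,\ell} - A_{h-1,\ell} = \sum_{\tset \ni h-1} \mathfrak{S}_{q,0}(\tset)$, where $\tset$ ranges over admissible $\ell$-subsets of $[1,h-1]$ containing $h-1$. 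Writing $\tset = \tset' \cup \{h-1\}$ with $\tset' \subset [1,h-2]$, $|\tset'| = \ell-1$, this equals $\sum_{\tset'} \mathfrak{S}_{q,0}(\tset' \cup \{h-1\})$. The remaining point is that this sum equals $B_{h-1,\ell-1} = \sum_{\tset' \subset [1,h-2]} \mathfrak{S}_{q,0}(\{0\} \cup \tset')$: this follows because $\mathfrak{S}_{q,0}$, as built from the singular series $\mathfrak{S}_q$ via Definition~\ref{def:1}, depends only on the pattern of the argument set up to translation (since $\nu_p$ is translation-invariant and the product in $\mathfrak{S}_q$ ranges over all $p \nmid q$), and $\{0\} \cup \tset'$ is the translate of $\{h-1\} \cup \tset'' $ — more carefully, one checks that admissibility $(t+a,q)=1$ together with $(0+a,q)=1$ and $((h-1)+a,q)=1$ is compatible, and then reindexes $\tset'$ by the shift $t \mapsto h-1-t$ to match the two sums termwise. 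The identity $C_{h-1,\ell-1} = A_{h,\ell} - A_{h-1,\ell}$ is obtained symmetrically, splitting instead on whether $1 \in \tset$.

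For the second-difference identity $D_{h-1,\ell-1} = A_{h,\ell} - 2A_{h-1,\ell} + A_{h-2,\ell}$, I would iterate: $A_{h,\ell} - A_{h-1,\ell} = B_{h-1,\ell-1}$ and $A_{h-1,\ell} - A_{h-2,\ell} = B_{h-2,\ell-1}$, so the second difference equals $B_{h-1,\ell-1} - B_{h-2,\ell-1} = \sum_{\tset \subset [1,h-2]} \mathfrak{S}_{q,0}(\{0\}\cup\tset) - \sum_{\tset \subset [1,h-3]} \mathfrak{S}_{q,0}(\{0\}\cup\tset)$. Splitting the first sum on whether $h-2 \in \tset$, the terms without $h-2$ cancel the second sum, leaving $\sum_{\tset' \subset [1,h-3],\, |\tset'|=\ell-2} \mathfrak{S}_{q,0}(\{0\} \cup \{h-2\} \cup \tset')$, which after reindexing is exactly $D_{h-1,\ell-1} = \sum_{\tset' \subset [1,h-2]} \mathfrak{S}_{q,0}(\{0,h-1\}\cup\tset')$ — one pushes the interval from $[1,h-3]$ to $[1,h-2]$ using the translation-invariance of $\mathfrak{S}_{q,0}$ and the fact that $\{0, h-2\}$ sitting inside $[0,h-2]$ is the same configuration as $\{0,h-1\}$ sitting inside $[0,h-1]$ after a relabeling that also moves the $\tset'$ points.

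The main obstacle I anticipate is bookkeeping the translation/reflection invariance of $\mathfrak{S}_{q,0}$ cleanly: one must verify that $\mathfrak{S}_{q,0}(\aset)$ depends only on the multiset of differences within $\aset$ (equivalently, that it is invariant under $\aset \mapsto \aset + c$), which follows from $\nu_p$ being translation-invariant and the inclusion–exclusion definition respecting this, and one must track the admissibility condition $(t+a,q)=1$ carefully through each reindexing so that the two sides genuinely range over the same collection of sets. Once the invariance is pinned down, each identity is a one-line telescoping/splitting argument, so I would state the invariance as a short preliminary observation and then dispatch all three relations in quick succession.
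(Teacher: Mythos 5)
Your overall strategy is the same one the paper uses: first and second differences of $A_{h,\ell}$ in $h$ telescope to pick out the subsets containing an endpoint, and one then identifies the result with $B$, $C$, $D$ using the symmetry of $\mathfrak{S}_{q,0}$ (the paper partitions by $\max\tset$ for $C$, by $\min\tset$ plus a translation for $B$, and by the diameter for $D$). The execution, however, has genuine gaps. The case you make hardest is actually the clean one: splitting on whether $h-1\in\tset$ identifies $A_{h,\ell}-A_{h-1,\ell}$ \emph{directly} with $C_{h-1,\ell-1}$ --- the summand $\mathfrak{S}_{q,0}(\tset'\cup\{h-1\})$ and the constraint set $\tset'\subset[1,h-2]$, $(t+a,q)=1$, are literally those in the definition of $C_{h-1,\ell-1}$, no reindexing needed (modulo the caveat below). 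Your route to $B$ via the reflection $t\mapsto h-1-t$ breaks on the admissibility condition: the excluded class $t\equiv-a\pmod q$ is sent to $s\equiv b-1\pmod q$, so the reflected family is not the family that $B_{h-1,\ell-1}$ sums over unless $a+b\equiv 1\pmod q$; the "compatibility" you assert is exactly what fails. The same defect afflicts your symmetric split on $1\in\tset$ for $C$: the complementary subsets live in $[2,h-1]$ and must be shifted to compare with $A_{h-1,\ell}$, which again moves the forbidden class. There is also the shared caveat (glossed in the paper too, but asserted rather than checked by you) that $h-1$ can lie in an admissible $\tset$ only when $(h-1+a,q)=1$, i.e. $b\not\equiv 1\pmod q$; otherwise the difference $A_{h,\ell}-A_{h-1,\ell}$ vanishes identically while $B_{h-1,\ell-1}$ need not.

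The second-difference step is where your proof genuinely fails. Carried out honestly, your telescoping gives $B_{h-1,\ell-1}-B_{h-2,\ell-1}=\sum\mathfrak{S}_{q,0}(\{0,h-2\}\cup\tset')$ over admissible $\tset'\subset[1,h-3]$ with $|\tset'|=\ell-2$, which is by definition $D_{h-2,\ell-2}$: a sum over sets of cardinality $\ell$ and diameter $h-2$. The target $D_{h-1,\ell-1}$ sums $\mathfrak{S}_{q,0}(\{0,h-1\}\cup\tset')$ over $\tset'\subset[1,h-2]$ of size $\ell-1$, i.e.\ over sets of cardinality $\ell+1$ and diameter $h-1$. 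No translation (or any affine relabeling) matches these two families --- $\{0,h-2\}$ is not a translate of $\{0,h-1\}$, and the cardinalities do not even agree --- so "pushing the interval from $[1,h-3]$ to $[1,h-2]$ by translation invariance" is not a legitimate move. What your computation actually proves is that the second difference equals $D_{h-2,\ell-2}$ (up to the admissibility caveats above); this in fact exposes that the index in the stated relation is off --- note the paper's own proof concludes with $D_{h-1,\ell-2}$ rather than the stated $D_{h-1,\ell-1}$ --- and the correct response is to flag that discrepancy, not to absorb it into an invalid reindexing.
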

\begin{proof}

Note that $A_{h,\ell}$ is a sum that ranges over all subsets $\tset$ of $[1, h-1]$. We can partition this sum by $\max\{\tset\}$. Setting $m = \max\{\tset\}$, we can write 
\begin{equation}\label{eq:31}
A_{h, \ell} = \sum_{m=\ell}^{h-1} \sum_{\substack{\tset \in [1, m-1] \\ \abs{T} = \ell-1 \\ (t+a, q) = 1}} \mathfrak{S}_{q,0}(\{m\} \cup \tset).
\end{equation} From Definition \ref{def:1},  $\mathfrak{S}_{q,0}(\tset) = \mathfrak{S}_{q,0}(s - \tset)$ for any integer $s$.  Using the translational invariance of $\mathfrak{S}_{q,0}$ and noting that \[C_{m, \ell-1} = \sum\limits_{\substack{\tset \in [1, m-1] \\ \abs{T} = \ell-1 \\ (t+a, q) = 1}} \mathfrak{S}_{q,0}(\{m\} \cup \tset),\] we can rewrite \eqref{eq:31} as $A_{h, \ell} = \sum\limits_{m=\ell}^{h-1} C_{m, \ell-1}$. Now consider $A_{h, \ell} - A_{h-1, \ell}$; every term in this difference cancels except $ C_{h-1, \ell-1}$, so $A_{h, \ell} - A_{h-1, \ell} = C_{h-1, \ell-1}$, as desired. 

Similarly, we can partition a sum over subsets $\tset$ of $[1,h-1]$ to a sum over sets $\tset$ whose minimum value is $m$. Thus \[A_{h, \ell} = \sum_{m=\ell}^{h-1} \sum_{\substack{\tset \in [m-\ell+1, h-1] \\ \abs{T} = \ell-1 \\ (t+a, q) = 1}} \mathfrak{S}_{q,0}(\{m-\ell\} \cup \tset).\] Translational invariance implies that \[B_{m, \ell-1} = \sum_{\substack{\tset \in [m-\ell+1, h-1] \\ \abs{T} = \ell-1 \\ (t+a, q) = 1}} \mathfrak{S}_{q,0}(\{m-\ell\} \cup \tset),\] so $A_{h, \ell} - A_{h-1, \ell}$ telescopes as before and only $B_{h-1, \ell-1}$ remains.
Therefore, $A_{h, \ell} - A_{h-1, \ell} = B_{h-1, \ell-1}$ as well. 

Finally, in order to relate $A_{h, \ell}$ to $D_{h, \ell}$, we write the sum over subsets of $[1, h-1]$ as a sum over $m$ and over sets $\tset$ where $\max\{\tset\} - \min\{\tset\} = m$. By translational invariance, because there are $h-m+1$ possibilities for $\min\{\tset\}$, there are $h-m+1$ copies of $\mathfrak{S}_{q,0}(\{1, m\} \cup \tset)$. Hence, the definition for $A_{h, \ell}$ can be rewritten as
\begin{equation}\label{eq:32}
A_{h, \ell} = \sum_{m=\ell}^{h-1} \sum_{\substack{\tset \in [2, m-1] \\ \abs{T} = \ell-2 \\ (t+a, q) = 1}} (h-m+1)\mathfrak{S}_{q,0}(\{1, m\} \cup \tset). 
\end{equation}  Recall that $D_{h,\ell} =  \displaystyle\sum_{\substack{\tset \subset [1, h-1] \\ (t+a, q)=1\\ \abs{\tset} = \ell} } \mathfrak{S}_{q,0}(\{0,h\} \cup \tset)$, so $A_{h, \ell} - A_{h-1, \ell} = \sum\limits_{m=\ell}^{h-1} D_{m, \ell-1}$. Therefore, we express \eqref{eq:32} as 
$A_{h, \ell} = \sum\limits_{m = \ell}^{h-1} (h-m+1)D_{m, \ell-2}$.  
Note that the sum telescopes when two successive differences are taken. What remains is $D_{h-1, \ell-2} = (A_{h, \ell} - A_{h-1, \ell}) - (A_{h-1, \ell} - A_{h-2, \ell}) = A_{h, \ell} - 2A_{h-1, \ell} + A_{h-2, \ell}$, as desired.
\end{proof}

\section{Acknowledgments} 
The author would like to thank his mentor Robert Burklund for his extremely helpful guidance. He is also very grateful for the extensive advice and feedback provided by John Rickert and Tanya Khovanova. The author would like to thank Robert Lemke Oliver for his helpful comments on my ideas. Finally, the author would like to thank Lawrence Washington for sacrificing his own time to explain the background of the project.

%

\appendix
\section{Numerical Results}\label{sec:numeric}

The following  simplified asymptotic for the case $\pi(x; 3, (a,b))$ is provided in \cite{oliver2016}: \begin{equation}\label{eq:19}
\pi(x; 3, (a, b)) = \frac{\li(x)}{4}\qty(1 \pm \frac{1}{2\log x}\log\qty(\frac{2\pi\log x}{q})) + O\qty(\frac{x}{(\log x)^{11/4}}),
\end{equation} 
with the plus or minus sign being plus if $a \neq b$ and minus if $a = b$.

We compare \eqref{eq:19} to the actual behavior of the primes, and find that because the approximations that were necessary to arrive at \eqref{eq:19}, the data deviate from the conjectured form. Using SageMath's \texttt{find\_fit} function suggested a possible lower order term of size $O\qty(\frac{(\log_2 x)^{2}}{(\log x)^2})$. We modified the data gathering process to approximate values of $\pi(x; q, (a,b))$ for $x \le 10^{18}$. Finally, we include more terms in the approximation of $\mathcal{D}(a, b; y)$ to improve its accuracy in future work. Graphs may be found in Appendix \ref{app:c}.

Lemke Oliver and Soundararajan \cite{oliver2016} gathered values of $\pi(x; q, (a, b))$ up to $x = 10^{12}$. We gathered data up to $x = 10^{18}$. We gathered complete raw data using SageMath for $1 \le x \le 10^{10}$. For $10^{10} < x \le 10^{18}$, a sampling technique was used to approximate the ratio $\pi(x; q, (a,b))/\pi(x)$. Lemke Oliver's \texttt{C++} code counts prime patterns in fixed intervals $[X, Y)$; the program was modified to only consider the first $10^8$ primes larger than $X$. We used $X = 10^{b_1}$ and $Y = 10^{b_1+1}$ for $10 \le b_1 \le 18$. The program estimated pattern frequencies at $X + i \cdot 10^{b_1}$ for $1 \le i \le 9$. 

Theorem \ref{thm:1} shows that the contributions $S_{\varnothing}$, $S_{\{0\}}$, $S_{\{h\}}$, and $S_{\{0, h\}}$ to $\mathcal{D}(a, b; y)$ decline quickly with $n$. After dividing by $\li(x)$, the main terms in the main conjecture in \cite{oliver2016} are of size $O(1)$, $O(\frac{\log \log x}{\log x})$, and $O(\frac{1}{\log x})$. When $\abs{\tset} = 6$, Theorem \ref{thm:1} implies that $S_{\varnothing} \in O\qty(\frac{(\log_2 y)^6}{(\log y)^2})$. Thus, for $n \ge 6$, $S_{\varnothing}$, $S_{\{0\}}$, $S_{\{h\}}$, and $S_{\{0, h\}}$ make negligible contributions to $\mathcal{D}$. This implies that long range correlations between prime patterns are negligible, which in turn implies that even though we only take the first $10^8$ primes after $X$, the sample can be reasonably assumed to be unbiased. 

Following Cram\'{e}r's model, we model primality as a binomial event with $x$ being prime with probability $\frac{1}{\log x}$ and assume that primality of $x$ and $y$ are independent events. Then the standard deviation of our sampling distribution is proportional to $\frac{1}{\sqrt{C}}$, where $C$ is the number of primes sampled in order to estimate the frequency of $\pi(x; q, (a, b))$ at $x$. 

For each point estimate at $X + i \cdot 10^{b_1}$, we sampled with $C = 10^8$, giving a precision of roughly $10^{-4}$. The sample gives a sampling frequency \[f_{a, b} = \frac{\pi(x+x_0; q, (a, b)) - \pi(x; q, (a,b))}{\pi(x+x_0) - \pi(x)},\] where $\pi(x+x_0) - \pi(x) = 10^7$. In a crude sense, the sampling frequency $f_{a,b}$ is the derivative of $\pi(x; q, (a,b))$, so we used a Riemann sum with $10$ equally spaced subintervals to estimate $\pi(X+i \cdot 10^{b_1};q,(a,b))$ from $f_{a,b}$. We thus computed \begin{equation}\label{eq:36}
\frac{\sum_{\beta = 1}^{b_1} \sum_{\alpha = 1}^9 f_{a, b}[\li((\alpha+1) \cdot 10^{\beta}) - \li(\alpha \cdot 10^{\beta})]}{\li(9 \cdot 10^{b_1})}.
\end{equation} Note that \eqref{eq:36} approximates the the ratio $\frac{\pi(10^{b_1+1}; q, (a, b))}{\pi(x)}$ and hence allows us to extend our data to $x = 10^{18}$. 

We restate the conjectured form for $\pi(x; q, (a,b))$ as in Conjecture \ref{conjec:1} for convenience as \begin{equation}\label{conjec:eq}
\pi(x; q, (a, b)) \sim \frac{1}{q}\int_{2}^{x} \alpha(y)^{\epsilon_q(a,b)}\qty(\frac{q}{\varphi(q)\alpha(y)\log y})^2 \mathcal{D}(a,b;y) dy.
\end{equation} The numerical model in \cite{oliver2016} is evaluated by partitioning $\mathcal{D}(a,b;y)$ into $\sum_n \mathcal{D}_n(a,b;y)$ and discarding $\mathcal{D}_{n}(a, b; y)$ for $n \ge 3$. Thus $\mathfrak{S}_{q,0}$ is estimated only for zero and two term sets to approximate $\mathcal{D}(a,b;y)$. For example, in \cite{oliver2016}, only the zero and two term sets for $\mathcal{D}_1(a,b;y)$ are considered. Lemke Oliver and Soundararajan then write \begin{equation}\label{eq:50}\mathcal{D}_1(a,b;y) \approx -\frac{q}{\varphi(q)\alpha(y)\log y}\sum_{\substack{h > 0 \\ h \equiv b-a \pmod{q}}} \sum_{\substack{t \in [1, h-1] \\ (t+a, q) = 1}} \mathfrak{S}_{q,0}(\{0, t\}) + \mathfrak{S}_{q,0}(\{t, h\}). \end{equation}

However, Theorem \ref{thm:1} suggests that only considering zero and two term sets may not accurate enough. Hence, we add terms to $\mathcal{D}_0$, $\mathcal{D}_1$, and $\mathcal{D}_2$, as well as truncating at $\mathcal{D}_5$ instead of $\mathcal{D}_2$ to approximate $\mathcal{D}$ in \eqref{conjec:eq}. For example, recalling that $\mathcal{D}_1(a,b;y)$ contains all terms of \eqref{eq:1} with $\abs{\tset} = 1$, we write \[\mathcal{D}_1(a,b;y) = -\frac{q}{\varphi(q)\alpha(y)\log y}\sum_{\substack{h > 0 \\ h \equiv b-a \pmod{q}}} \sum_{\substack{t \in [1, h-1] \\ (t+a, q) = 1}} \mathfrak{S}_{q,0}(\{0, t\}) + \mathfrak{S}_{q,0}(\{t, h\}) + \mathfrak{S}_{q,0}(\{0, t, h\}). \] This is essentially \eqref{eq:50} but with three term sets included. The values of singular series $\mathfrak{S}_{q, 0}(\mathcal{H})$ were computed up to five term sets with $\max \mathcal{H} \le 150$ and prepared for future work numerically integrating \eqref{conjec:eq} by truncating at $\mathcal{D}_5(a,b;y)$.  
\section{Plots of Data and Model}\label{app:c}

This appendix contains the plots of raw data, extended data, curve fitting. 
\begin{figure}[!htb]
    \centering
    \includegraphics[width=0.6\linewidth]{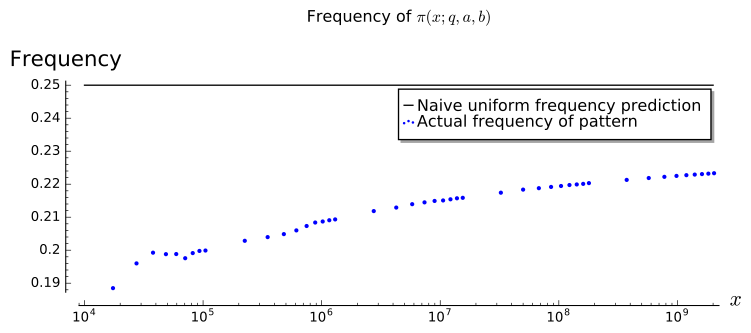}
    \caption{The proportion $\frac{\pi(x; 3, (1, 1))}{\pi(x)}$ for $x_0 \le x \le x_1$ where $\pi(x_0) = 10^4$ and $\pi(x_1) = 10^9$.}
    \label{fig:my_label1}
\end{figure}

\begin{figure}[!h]
\centering
\begin{minipage}{0.48\textwidth}
  \centering
  \includegraphics[width=\linewidth]{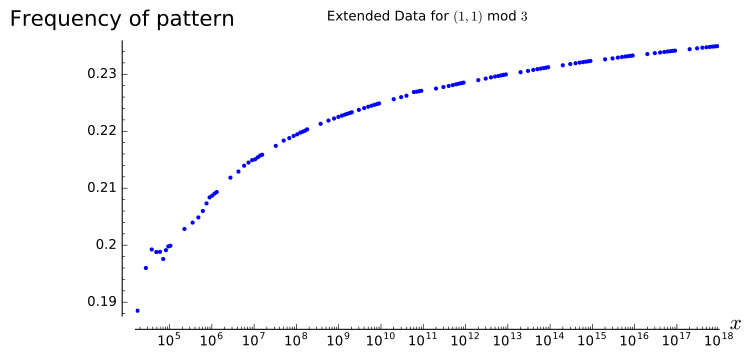}
  \captionof{figure}{The extended data for $(1, 1)$ modulo $3$. The slight bump at $5 \cdot 10^{10}$ is due to combining the raw and sampled data.}
  \label{fig:test1}
\end{minipage}\hfill
\begin{minipage}{0.48\textwidth}
  \centering
  \includegraphics[width=\linewidth]{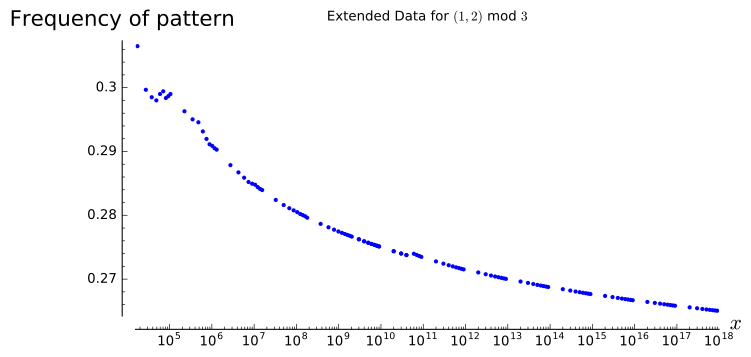}
  \captionof{figure}{The extended data for $(1, 2)$ modulo $3$. The slight bump at $5 \cdot 10^{10}$ is due to stitching the raw data and sampled data together.}
  \label{fig:test2}
\end{minipage}
\end{figure}

\begin{figure}[!h]
\centering
\begin{minipage}{.48\textwidth}
  \centering
  \includegraphics[width=\linewidth]{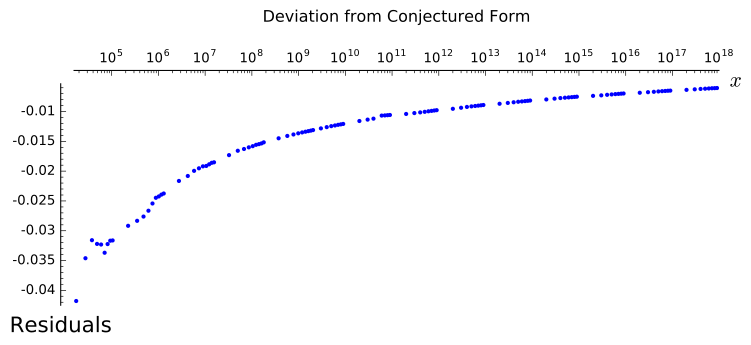}
  \captionof{figure}{The residuals when \eqref{eq:19} is subtracted from $\pi(x; 3, (1, 1))$ for $x_0\le  x \le x_1$ where $\pi(x_0) = 10^4$ and $\pi(x_1) = 10^{18}$, using the extended data.}
  \label{fig:test3}
\end{minipage}\hfill
\begin{minipage}{.48\textwidth}
  \centering
  \includegraphics[width=\linewidth]{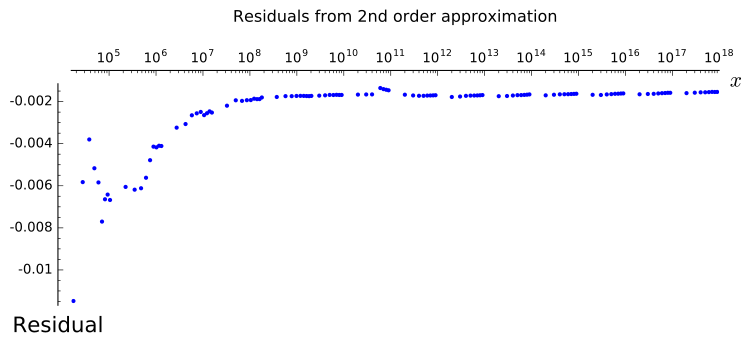}
  \captionof{figure}{The residuals when curve fitted terms of size $O((\log_2 y)^2/(\log y)^2)$ and \eqref{eq:19} are subtracted from $\pi(x; 3, (1, 1))$ for $x_0\le  x \le x_1$ where $\pi(x_0) = 10^4$ and $\pi(x_1) = 10^{18}$, using the extended data.}
  \label{fig:test4}
\end{minipage}
\end{figure}
\eject

\bibliography{bib}
\bibliographystyle{ieeetr}

\end{document}